\documentclass{article}%
\usepackage{amsfonts}
\usepackage{amssymb}
\usepackage{amsmath}
\usepackage{amsthm}
\usepackage{fullpage}
\usepackage{xcolor}
\usepackage{graphicx}
\usepackage{graphics}
\usepackage{pstricks}
\usepackage{pst-node}
\usepackage{pst-plot}
\usepackage{slashbox}
\usepackage{float}%
\setcounter{MaxMatrixCols}{30}
\providecommand{\U}[1]{\protect\rule{.1in}{.1in}}
\allowdisplaybreaks
\newtheorem{theorem}{Theorem}
\newtheorem{lemma}{Lemma}

\newtheorem{example}{Example}
\newtheorem{remark}{Remark}
\newtheorem{notation}{Notation}

\begin{document}

\title{
Block Structure of Cyclotomic Polynomials}
\author{Ala'a Al-Kateeb, Hoon Hong, Eunjeong Lee}
\date{\today}
\maketitle

\begin{abstract}
In this paper, we list  several interesting  structures of  cyclotomic polynomials:   specifically relations among blocks obtained by suitable  partition of cyclotomic polynomials. 
 We present explicit and self-contained proof for all
of them, using a uniform terminology and technique.
\end{abstract}

\section{Introduction}

The cyclotomic polynomial $\Phi_{n}(x)$ is the monic polynomial in
$\mathbb{Z}[x]$ whose zeros are the primitive $n$-th roots of unity. It has
numerous application in many areas of mathematics, science and engineering.
Thus it is important to understand its structure.

In this paper, we list  several interesting  structures of  cyclotomic polynomials:   specifically relations among blocks obtained by suitable  partition of cyclotomic polynomials. 
  Some of the structures are already known or at
least implicit in the previous literature~\cite{KA1,KA2,AM2}. Others are new,
as far as we are aware.  We present explicit and self-contained proof for all
of them, using a uniform terminology.

Let $m$ be a fixed odd square-free positive integer and $p$ be a prime number
relatively prime to $m$. Let $f_{m,p,i}$ be the $i$-th``block" of $\Phi_{mp}$
in the radix $x^{p}$. Let $f_{m,p,i,j}$ be the $j$-th ``block'' of $f_{m,p,i}$
in the radix $x^{m}$. We provide the following types of structural results.

\begin{itemize}
\item Theorem \ref{thm:explicit} provides an explicit formula for $f_{m,p,i,j}$.

\item Theorem \ref{thm:intra-str} lists several ``intra'' structures of
$f_{m,p,i,j}$, that is the relations among them with same~$p$ but different
$i$ and $j$.

\item Theorem \ref{thm:inter-str} lists several `inter" properties of
$f_{m,p,i,j}$, that is the relation among them with different~$p$, but with
same $i$ and $j$.
\end{itemize}

\section{Block Structures}

\begin{notation}
[Partition]\label{notation:partition}Let%
\begin{align*}
\Phi_{mp}\left(  x\right)   &  =\sum_{i\geq0}f_{m,p,i}\left(  x\right)
\ x^{ip} &  &  \text{where }\deg f_{m,p,i}\left(  x\right)  <p\\
f_{m,p,i}\left(  x\right)   &  =\sum_{j\geq0}f_{m,p,i,j}\left(  x\right)
x^{jm} &  &  \text{where }\deg f_{m,p,i,j}\left(  x\right)  <m
\end{align*}

\end{notation}

\begin{example}
[Partition]We will visualize a polynomial by a graph where the horizonal axis
stands for the exponents and the vertical axis stands for the corresponding
coefficients.Let $m=15$ and $p=53$. Then $\phi(m)-1=7$, $q = 3$ and $r=8$.The partition of the list of the coefficients of $\Phi_{mp}$ into $f_{m,p,i,j}$'s is illustrated by the following diagram.{\tiny\psset{unit=0.15}\[\pspicture[shift=*](-0.5,-4)(105.5,6)\psset{linewidth=1pt,linecolor=red}\psline{-}(-0.5,1.0)(0.5,1.0)(0.5,1.0)(1.5,1.0)(1.5,1.0)(2.5,1.0)(2.5,0.0)(3.5,0.0)(3.5,0.0)(4.5,0.0)(4.5,-1.0)(5.5,-1.0)(5.5,-1.0)(6.5,-1.0)(6.5,-1.0)(7.5,-1.0)(7.5,0.0)(8.5,0.0)(8.5,0.0)(9.5,0.0)(9.5,0.0)(10.5,0.0)(10.5,0.0)(11.5,0.0)(11.5,0.0)(12.5,0.0)(12.5,0.0)(13.5,0.0)(13.5,0.0)(14.5,0.0)(14.5,1.0)(15.5,1.0)(15.5,1.0)(16.5,1.0)(16.5,1.0)(17.5,1.0)(17.5,0.0)(18.5,0.0)(18.5,0.0)(19.5,0.0)(19.5,-1.0)(20.5,-1.0)(20.5,-1.0)(21.5,-1.0)(21.5,-1.0)(22.5,-1.0)(22.5,0.0)(23.5,0.0)(23.5,0.0)(24.5,0.0)(24.5,0.0)(25.5,0.0)(25.5,0.0)(26.5,0.0)(26.5,0.0)(27.5,0.0)(27.5,0.0)(28.5,0.0)(28.5,0.0)(29.5,0.0)(29.5,1.0)(30.5,1.0)(30.5,1.0)(31.5,1.0)(31.5,1.0)(32.5,1.0)(32.5,0.0)(33.5,0.0)(33.5,0.0)(34.5,0.0)(34.5,-1.0)(35.5,-1.0)(35.5,-1.0)(36.5,-1.0)(36.5,-1.0)(37.5,-1.0)(37.5,0.0)(38.5,0.0)(38.5,0.0)(39.5,0.0)(39.5,0.0)(40.5,0.0)(40.5,0.0)(41.5,0.0)(41.5,0.0)(42.5,0.0)(42.5,0.0)(43.5,0.0)(43.5,0.0)(44.5,0.0)(44.5,1.0)(45.5,1.0)(45.5,1.0)(46.5,1.0)(46.5,1.0)(47.5,1.0)(47.5,0.0)(48.5,0.0)(48.5,0.0)(49.5,0.0)(49.5,-1.0)(50.5,-1.0)(50.5,-1.0)(51.5,-1.0)(51.5,-1.0)(52.5,-1.0)(52.5,-1.0)(53.5,-1.0)(53.5,-1.0)(54.5,-1.0)(54.5,-1.0)(55.5,-1.0)(55.5,0.0)(56.5,0.0)(56.5,0.0)(57.5,0.0)(57.5,1.0)(58.5,1.0)(58.5,1.0)(59.5,1.0)(59.5,2.0)(60.5,2.0)(60.5,1.0)(61.5,1.0)(61.5,1.0)(62.5,1.0)(62.5,0.0)(63.5,0.0)(63.5,0.0)(64.5,0.0)(64.5,-1.0)(65.5,-1.0)(65.5,-1.0)(66.5,-1.0)(66.5,-1.0)(67.5,-1.0)(67.5,-1.0)(68.5,-1.0)(68.5,-1.0)(69.5,-1.0)(69.5,-1.0)(70.5,-1.0)(70.5,0.0)(71.5,0.0)(71.5,0.0)(72.5,0.0)(72.5,1.0)(73.5,1.0)(73.5,1.0)(74.5,1.0)(74.5,2.0)(75.5,2.0)(75.5,1.0)(76.5,1.0)(76.5,1.0)(77.5,1.0)(77.5,0.0)(78.5,0.0)(78.5,0.0)(79.5,0.0)(79.5,-1.0)(80.5,-1.0)(80.5,-1.0)(81.5,-1.0)(81.5,-1.0)(82.5,-1.0)(82.5,-1.0)(83.5,-1.0)(83.5,-1.0)(84.5,-1.0)(84.5,-1.0)(85.5,-1.0)(85.5,0.0)(86.5,0.0)(86.5,0.0)(87.5,0.0)(87.5,1.0)(88.5,1.0)(88.5,1.0)(89.5,1.0)(89.5,2.0)(90.5,2.0)(90.5,1.0)(91.5,1.0)(91.5,1.0)(92.5,1.0)(92.5,0.0)(93.5,0.0)(93.5,0.0)(94.5,0.0)(94.5,-1.0)(95.5,-1.0)(95.5,-1.0)(96.5,-1.0)(96.5,-1.0)(97.5,-1.0)(97.5,-1.0)(98.5,-1.0)(98.5,-1.0)(99.5,-1.0)(99.5,-1.0)(100.5,-1.0)(100.5,0.0)(101.5,0.0)(101.5,0.0)(102.5,0.0)(102.5,1.0)(103.5,1.0)(103.5,1.0)(104.5,1.0)(104.5,2.0)(105.5,2.0)(105.5,1.0)(106.5,1.0)
\psset{linewidth=0.1pt,linecolor=black}\psline{-}(-0.3,0.0)(0.3,0.0)\psline{-}(0.7,0.0)(1.3,0.0)\psline{-}(1.7,0.0)(2.3,0.0)\psline{-}(2.7,0.0)(3.3,0.0)\psline{-}(3.7,0.0)(4.3,0.0)\psline{-}(4.7,0.0)(5.3,0.0)\psline{-}(5.7,0.0)(6.3,0.0)\psline{-}(6.7,0.0)(7.3,0.0)\psline{-}(7.7,0.0)(8.3,0.0)\psline{-}(8.7,0.0)(9.3,0.0)\psline{-}(9.7,0.0)(10.3,0.0)\psline{-}(10.7,0.0)(11.3,0.0)\psline{-}(11.7,0.0)(12.3,0.0)\psline{-}(12.7,0.0)(13.3,0.0)\psline{-}(13.7,0.0)(14.3,0.0)\psline{-}(14.7,0.0)(15.3,0.0)\psline{-}(15.7,0.0)(16.3,0.0)\psline{-}(16.7,0.0)(17.3,0.0)\psline{-}(17.7,0.0)(18.3,0.0)\psline{-}(18.7,0.0)(19.3,0.0)\psline{-}(19.7,0.0)(20.3,0.0)\psline{-}(20.7,0.0)(21.3,0.0)\psline{-}(21.7,0.0)(22.3,0.0)\psline{-}(22.7,0.0)(23.3,0.0)\psline{-}(23.7,0.0)(24.3,0.0)\psline{-}(24.7,0.0)(25.3,0.0)\psline{-}(25.7,0.0)(26.3,0.0)\psline{-}(26.7,0.0)(27.3,0.0)\psline{-}(27.7,0.0)(28.3,0.0)\psline{-}(28.7,0.0)(29.3,0.0)\psline{-}(29.7,0.0)(30.3,0.0)\psline{-}(30.7,0.0)(31.3,0.0)\psline{-}(31.7,0.0)(32.3,0.0)\psline{-}(32.7,0.0)(33.3,0.0)\psline{-}(33.7,0.0)(34.3,0.0)\psline{-}(34.7,0.0)(35.3,0.0)\psline{-}(35.7,0.0)(36.3,0.0)\psline{-}(36.7,0.0)(37.3,0.0)\psline{-}(37.7,0.0)(38.3,0.0)\psline{-}(38.7,0.0)(39.3,0.0)\psline{-}(39.7,0.0)(40.3,0.0)\psline{-}(40.7,0.0)(41.3,0.0)\psline{-}(41.7,0.0)(42.3,0.0)\psline{-}(42.7,0.0)(43.3,0.0)\psline{-}(43.7,0.0)(44.3,0.0)\psline{-}(44.7,0.0)(45.3,0.0)\psline{-}(45.7,0.0)(46.3,0.0)\psline{-}(46.7,0.0)(47.3,0.0)\psline{-}(47.7,0.0)(48.3,0.0)\psline{-}(48.7,0.0)(49.3,0.0)\psline{-}(49.7,0.0)(50.3,0.0)\psline{-}(50.7,0.0)(51.3,0.0)\psline{-}(51.7,0.0)(52.3,0.0)\psline{-}(52.7,0.0)(53.3,0.0)\psline{-}(53.7,0.0)(54.3,0.0)\psline{-}(54.7,0.0)(55.3,0.0)\psline{-}(55.7,0.0)(56.3,0.0)\psline{-}(56.7,0.0)(57.3,0.0)\psline{-}(57.7,0.0)(58.3,0.0)\psline{-}(58.7,0.0)(59.3,0.0)\psline{-}(59.7,0.0)(60.3,0.0)\psline{-}(60.7,0.0)(61.3,0.0)\psline{-}(61.7,0.0)(62.3,0.0)\psline{-}(62.7,0.0)(63.3,0.0)\psline{-}(63.7,0.0)(64.3,0.0)\psline{-}(64.7,0.0)(65.3,0.0)\psline{-}(65.7,0.0)(66.3,0.0)\psline{-}(66.7,0.0)(67.3,0.0)\psline{-}(67.7,0.0)(68.3,0.0)\psline{-}(68.7,0.0)(69.3,0.0)\psline{-}(69.7,0.0)(70.3,0.0)\psline{-}(70.7,0.0)(71.3,0.0)\psline{-}(71.7,0.0)(72.3,0.0)\psline{-}(72.7,0.0)(73.3,0.0)\psline{-}(73.7,0.0)(74.3,0.0)\psline{-}(74.7,0.0)(75.3,0.0)\psline{-}(75.7,0.0)(76.3,0.0)\psline{-}(76.7,0.0)(77.3,0.0)\psline{-}(77.7,0.0)(78.3,0.0)\psline{-}(78.7,0.0)(79.3,0.0)\psline{-}(79.7,0.0)(80.3,0.0)\psline{-}(80.7,0.0)(81.3,0.0)\psline{-}(81.7,0.0)(82.3,0.0)\psline{-}(82.7,0.0)(83.3,0.0)\psline{-}(83.7,0.0)(84.3,0.0)\psline{-}(84.7,0.0)(85.3,0.0)\psline{-}(85.7,0.0)(86.3,0.0)\psline{-}(86.7,0.0)(87.3,0.0)\psline{-}(87.7,0.0)(88.3,0.0)\psline{-}(88.7,0.0)(89.3,0.0)\psline{-}(89.7,0.0)(90.3,0.0)\psline{-}(90.7,0.0)(91.3,0.0)\psline{-}(91.7,0.0)(92.3,0.0)\psline{-}(92.7,0.0)(93.3,0.0)\psline{-}(93.7,0.0)(94.3,0.0)\psline{-}(94.7,0.0)(95.3,0.0)\psline{-}(95.7,0.0)(96.3,0.0)\psline{-}(96.7,0.0)(97.3,0.0)\psline{-}(97.7,0.0)(98.3,0.0)\psline{-}(98.7,0.0)(99.3,0.0)\psline{-}(99.7,0.0)(100.3,0.0)\psline{-}(100.7,0.0)(101.3,0.0)\psline{-}(101.7,0.0)(102.3,0.0)\psline{-}(102.7,0.0)(103.3,0.0)\psline{-}(103.7,0.0)(104.3,0.0)\psline{-}(104.7,0.0)(105.3,0.0)\psline{-}(105.7,0.0)(106.3,0.0)\rput(7.0,-3.0){$f_{m,p,0,0}$}\psset{linewidth=0.1pt,linecolor=blue}\psline{-}(-0.5,-4.0)(-0.5,4.0)\pcline[linestyle=dashed]{<->}(-0.5,3.5)(14.5,3.5)\ncput*{$m$}\rput(22.0,-3.0){$f_{m,p,0,1}$}\psset{linewidth=0.1pt,linecolor=blue}\psline{-}(14.5,-4.0)(14.5,4.0)\pcline[linestyle=dashed]{<->}(14.5,3.5)(29.5,3.5)\ncput*{$m$}\rput(37.0,-3.0){$f_{m,p,0,2}$}\psset{linewidth=0.1pt,linecolor=blue}\psline{-}(29.5,-4.0)(29.5,4.0)\pcline[linestyle=dashed]{<->}(29.5,3.5)(44.5,3.5)\ncput*{$m$}\rput(48.5,-3.0){$f_{m,p,0,3}$}\psset{linewidth=0.1pt,linecolor=blue}\psline{-}(44.5,-4.0)(44.5,4.0)\pcline[linestyle=dashed]{<->}(44.5,3.5)(52.5,3.5)\ncput*{$r$}\psset{linewidth=0.1pt,linecolor=blue}\psline{-}(-0.5,-4.0)(-0.5,6.0)\pcline[linestyle=dashed]{<->}(-0.5,5.0)(52.5,5.0)\ncput*{$p$}\rput(60.0,-3.0){$f_{m,p,1,0}$}\psset{linewidth=0.1pt,linecolor=blue}\psline{-}(52.5,-4.0)(52.5,4.0)\pcline[linestyle=dashed]{<->}(52.5,3.5)(67.5,3.5)\ncput*{$m$}\rput(75.0,-3.0){$f_{m,p,1,1}$}\psset{linewidth=0.1pt,linecolor=blue}\psline{-}(67.5,-4.0)(67.5,4.0)\pcline[linestyle=dashed]{<->}(67.5,3.5)(82.5,3.5)\ncput*{$m$}\rput(90.0,-3.0){$f_{m,p,1,2}$}\psset{linewidth=0.1pt,linecolor=blue}\psline{-}(82.5,-4.0)(82.5,4.0)\pcline[linestyle=dashed]{<->}(82.5,3.5)(97.5,3.5)\ncput*{$m$}\rput(101.5,-3.0){$f_{m,p,1,3}$}\psset{linewidth=0.1pt,linecolor=blue}\psline{-}(97.5,-4.0)(97.5,4.0)\pcline[linestyle=dashed]{<->}(97.5,3.5)(105.5,3.5)\ncput*{$r$}\psset{linewidth=0.1pt,linecolor=blue}\psline{-}(52.5,-4.0)(52.5,6.0)\pcline[linestyle=dashed]{<->}(52.5,5.0)(105.5,5.0)\ncput*{$p$}\psset{linewidth=0.1pt,linecolor=blue}\psline{-}(105.5,-4.0)(105.5,6.0)\rput[l](108.0,0){$\ldots\ldots$}\endpspicture \]} 
\end{example}

\noindent\noindent Let ${\mathrm{rem}}$ stand for polynomial remainder or
integer remainder. For integer remainder, we will require that the remainder
is non-negative. We need to define some operations on $f_{m,p,i,j}$'s.

\begin{notation}
[Operation]\label{not:operation} For a polynomial $f$ of degree less than $m$, let

\begin{enumerate}
\item $\mathcal{T}_{s}f={\mathrm{rem}}(f,x^{s})$ \hspace{9em}
\textquotedblleft Truncate\textquotedblright

\item $\mathcal{F}f=x^{m-1}f\left(  x\newline^{-1}\right)  $ \hspace{8em}
\textquotedblleft Flip\textquotedblright

\item $\mathcal{R}_{s}f={\mathrm{rem}}(x^{m-\operatorname*{rem}\left(
s,m\right)  }f,x^{m}-1)$ \hspace{1.6em} \textquotedblleft
Rotate\textquotedblright

\item $\mathcal{E}_{s}f=f(x^{\operatorname*{rem}\left(  s,m\right)  })$
\hspace{8.3em} \textquotedblleft Expand\textquotedblright
\end{enumerate}
\end{notation}

\begin{example}
[Operation]Let $f=1+2x+3x^{2}+4x^{3}+5x^{4}$ and $m=5$. Note \psset{unit=0.3}\[
\begin{array}{cccccc}
f & \mathcal{T}_{3} f & \mathcal{F} f & \mathcal{R}_{2} f &\mathcal{E}_{3} f \\ 
\pspicture[shift=*](-0.5,-5)(4.5,5)\psset{linewidth=1pt,linecolor=red}\psline{-}(-0.5,1.0)(0.5,1.0)(0.5,2.0)(1.5,2.0)(1.5,3.0)(2.5,3.0)(2.5,4.0)(3.5,4.0)(3.5,5.0)(4.5,5.0)\psset{linewidth=0.1pt,linecolor=black}\psline{-}(-0.3,0.0)(0.3,0.0)\psline{-}(0.7,0.0)(1.3,0.0)\psline{-}(1.7,0.0)(2.3,0.0)\psline{-}(2.7,0.0)(3.3,0.0)\psline{-}(3.7,0.0)(4.3,0.0)\endpspicture  & 
\pspicture[shift=*](-0.5,-5)(4.5,5)\psset{linewidth=1pt,linecolor=red}\psline{-}(-0.5,1.0)(0.5,1.0)(0.5,2.0)(1.5,2.0)(1.5,3.0)(2.5,3.0)\psset{linewidth=0.1pt,linecolor=black}\psline{-}(-0.3,0.0)(0.3,0.0)\psline{-}(0.7,0.0)(1.3,0.0)\psline{-}(1.7,0.0)(2.3,0.0)\endpspicture  & 
\pspicture[shift=*](-0.5,-5)(4.5,5)\psset{linewidth=1pt,linecolor=red}\psline{-}(-0.5,5.0)(0.5,5.0)(0.5,4.0)(1.5,4.0)(1.5,3.0)(2.5,3.0)(2.5,2.0)(3.5,2.0)(3.5,1.0)(4.5,1.0)\psset{linewidth=0.1pt,linecolor=black}\psline{-}(-0.3,0.0)(0.3,0.0)\psline{-}(0.7,0.0)(1.3,0.0)\psline{-}(1.7,0.0)(2.3,0.0)\psline{-}(2.7,0.0)(3.3,0.0)\psline{-}(3.7,0.0)(4.3,0.0)\endpspicture  & 
\pspicture[shift=*](-0.5,-5)(4.5,5)\psset{linewidth=1pt,linecolor=red}\psline{-}(-0.5,3.0)(0.5,3.0)(0.5,4.0)(1.5,4.0)(1.5,5.0)(2.5,5.0)(2.5,1.0)(3.5,1.0)(3.5,2.0)(4.5,2.0)\psset{linewidth=0.1pt,linecolor=black}\psline{-}(-0.3,0.0)(0.3,0.0)\psline{-}(0.7,0.0)(1.3,0.0)\psline{-}(1.7,0.0)(2.3,0.0)\psline{-}(2.7,0.0)(3.3,0.0)\psline{-}(3.7,0.0)(4.3,0.0)\endpspicture  & 
\pspicture[shift=*](-0.5,-5)(14.5,5)\psset{linewidth=1pt,linecolor=red}\psline{-}(-0.5,1.0)(0.5,1.0)(0.5,0.0)(1.5,0.0)(1.5,0.0)(2.5,0.0)(2.5,2.0)(3.5,2.0)(3.5,0.0)(4.5,0.0)(4.5,0.0)(5.5,0.0)(5.5,3.0)(6.5,3.0)(6.5,0.0)(7.5,0.0)(7.5,0.0)(8.5,0.0)(8.5,4.0)(9.5,4.0)(9.5,0.0)(10.5,0.0)(10.5,0.0)(11.5,0.0)(11.5,5.0)(12.5,5.0)\psset{linewidth=0.1pt,linecolor=black}\psline{-}(-0.3,0.0)(0.3,0.0)\psline{-}(0.7,0.0)(1.3,0.0)\psline{-}(1.7,0.0)(2.3,0.0)\psline{-}(2.7,0.0)(3.3,0.0)\psline{-}(3.7,0.0)(4.3,0.0)\psline{-}(4.7,0.0)(5.3,0.0)\psline{-}(5.7,0.0)(6.3,0.0)\psline{-}(6.7,0.0)(7.3,0.0)\psline{-}(7.7,0.0)(8.3,0.0)\psline{-}(8.7,0.0)(9.3,0.0)\psline{-}(9.7,0.0)(10.3,0.0)\psline{-}(10.7,0.0)(11.3,0.0)\psline{-}(11.7,0.0)(12.3,0.0)\endpspicture \end{array}
\]
\end{example}

\noindent Throughout this paper, for an integer $m$ and a prime $p$, we denote%
\[
r:=\mathrm{rem}\left(  p,m\right)  ,\;\;q:=\mathrm{quo}(p,m)
\]
where $\mathrm{quo,}$ of course, stand for quotient. Now we list several
structures: some known and some new.

\begin{theorem}
[Explicit]\label{thm:explicit} For $0\leq i\leq\varphi(m)-1$ and $0\leq j\leq q$,
\[
f_{m,p,i,j}=%
\begin{cases}
-\mathcal{R}_{ir}(\Psi_{m}\cdot\mathcal{E}_{r}\mathcal{T}_{i+1}\Phi_{m}) &
0\leq j\leq q-1\\
~~\mathcal{T}_{r}f_{m,p,i,0} & j=q
\end{cases}
\]
where $\Psi_{m}\left(  x\right)  =\frac{x^{m}-1}{\Phi_{m}\left(  x\right)  }$,
the $m$-th inverse cyclotomic polynomial.
\end{theorem}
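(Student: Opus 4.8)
The plan is to start from the standard factorization $\Phi_{mp}(x)=\Phi_m(x^p)/\Phi_m(x)$, valid because $p\nmid m$, and to read the coefficients of $\Phi_{mp}$ off a power-series expansion. Writing $\Phi_m(x)=\sum_{k=0}^{\varphi(m)}c_kx^{k}$, the numerator is $\Phi_m(x^p)=\sum_k c_k x^{kp}$, supported only on multiples of $p$. For the denominator I would bring in $\Psi_m$: since $\Phi_m\Psi_m=x^m-1$, one has $1/\Phi_m(x)=-\Psi_m(x)/(1-x^m)=-\Psi_m(x)\sum_{\ell\ge 0}x^{\ell m}$ as a formal power series. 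Because $\deg\Psi_m=m-\varphi(m)<m$, consecutive copies of $-\Psi_m$ do not overlap, so this coefficient sequence is periodic of period $m$: the coefficient of $x^n$ in $1/\Phi_m$ equals $-\psi_{\rem(n,m)}$, where $\psi_t$ is the coefficient of $x^t$ in $\Psi_m$.

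Multiplying the two series, the coefficient of $x^N$ in $\Phi_{mp}$ is $\sum_k c_k\bigl(-\psi_{\rem(N-kp,\,m)}\bigr)$, summed over $k$ with $N-kp\ge 0$. I would then fix a block: for $N$ in the $(i,j)$-block write $N=ip+jm+t$ with $0\le t<m$, so the coefficient of $x^t$ in $f_{m,p,i,j}$ is the coefficient of $x^N$ in $\Phi_{mp}$. Substituting $d=i-k$ and using $p\equiv r\pmod m$ turns $\rem(N-kp,m)$ into $\rem(dr+t,m)$, which is free of $j$. The summation range needs care: the lower cutoff $d\ge 0$ comes from the power series having no negative-degree terms, and the upper cutoff $d\le i$ from $c_{i-d}$ vanishing once $i-d<0$ (the hypothesis $0\le i\le\varphi(m)-1$ keeps $i-d\le\varphi(m)$ automatically). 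This gives
\[
[x^t]\,f_{m,p,i,j}\;=\;-\sum_{d=0}^{i}c_{i-d}\,\psi_{\rem(dr+t,\,m)}\qquad(0\le t<m),
\]
independent of $j$. Hence all blocks $j=0,\dots,q-1$ coincide, while the block $j=q$ differs only because $N<(i+1)p$ forces $t<r$, yielding $f_{m,p,i,q}=\mathcal{T}_r f_{m,p,i,0}$ and so the second case of the theorem.

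It remains to identify the common block with $-\mathcal{R}_{ir}\bigl(\Psi_m\cdot\mathcal{E}_r\mathcal{T}_{i+1}\Phi_m\bigr)$, which I would verify by a direct coefficient comparison in $\mathbb{Z}[x]/(x^m-1)$. Here $\mathcal{T}_{i+1}\Phi_m=\sum_{k=0}^{i}c_kx^{k}$ and $\mathcal{E}_r\mathcal{T}_{i+1}\Phi_m=\sum_{k=0}^{i}c_kx^{kr}$, so multiplying by $\Psi_m$ and reducing modulo $x^m-1$ gives $x^u$-coefficient $\sum_{k=0}^{i}c_k\psi_{\rem(u-kr,\,m)}$. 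The operator $\mathcal{R}_{ir}$ acts as the cyclic shift $[x^t]\mathcal{R}_{ir}g=[x^{\rem(t+ir,\,m)}]g$; applying it and again setting $d=i-k$ converts $\rem(u-kr,m)$ with $u=\rem(t+ir,m)$ into $\rem(t+dr,m)$. Thus the $x^t$-coefficient of $-\mathcal{R}_{ir}(\Psi_m\cdot\mathcal{E}_r\mathcal{T}_{i+1}\Phi_m)$ is exactly $-\sum_{d=0}^{i}c_{i-d}\psi_{\rem(dr+t,\,m)}$, matching the displayed formula and finishing the argument.

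I expect the main obstacle to be index bookkeeping rather than any deep idea: pinning down the exact range $0\le d\le i$ and confirming that the two cutoffs really are the stated ones, and justifying that $\mathcal{E}_r$ and $\mathcal{R}_{ir}$ — nominally defined for inputs of degree $<m$ — may legitimately be applied to $\Psi_m\cdot\mathcal{E}_r\mathcal{T}_{i+1}\Phi_m$, whose degree can exceed $m$, by reading them off their defining $\rem(\cdot,\,x^m-1)$ formulas, i.e.\ by working consistently modulo $x^m-1$.
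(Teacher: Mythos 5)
Your proposal is correct and follows essentially the same route as the paper: both rest on the factorization $\Phi_{mp}=\Phi_m(x^p)\cdot\bigl(-\Psi_m\sum_{\ell\ge0}x^{\ell m}\bigr)$, the resulting period-$m$ structure of the coefficients of $1/\Phi_m$, and the convolution/reindexing $\mathrm{rem}(ip+jm+t-kp,\,m)=\mathrm{rem}((i-k)r+t,\,m)$, with the operator identification at the end being exactly the content of Lemma~\ref{lem:operation}. The paper merely packages the same computation into block-level lemmas (Lemmas~\ref{lemma:phi_psi}, \ref{lem:g}, \ref{lem:f_g}) rather than carrying it out coefficientwise, and your closing remark about reading $\mathcal{R}$ and $\mathcal{E}$ modulo $x^m-1$ is the right way to handle the degree caveat in Notation~\ref{not:operation}.
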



\begin{theorem}
[Intra-Structure]\ \label{thm:intra-str} Within a cyclotomic polynomial, we have

\begin{enumerate}
\item (Repetition)\ \ $f_{m,p,i,0}=\cdots=f_{m,p,i,q-1}$

\item (Truncation) $f_{m,p,i,q}=\mathcal{T}_{r}f_{m,p,i,0}$

\item (Symmetry)\ $f_{m,p,i^{\prime},0}=\mathcal{R}_{\varphi(m)-1-r}%
\mathcal{F}f_{m,p,i,0}$ \hspace{2.4em} if $i^{\prime}+i=\varphi(m)-1$
\end{enumerate}
\end{theorem}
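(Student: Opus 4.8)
Parts (1) and (2) require no work beyond Theorem~\ref{thm:explicit}: for $0\le j\le q-1$ the closed form $-\mathcal{R}_{ir}(\Psi_m\cdot\mathcal{E}_r\mathcal{T}_{i+1}\Phi_m)$ is manifestly independent of $j$, which is exactly the Repetition statement $f_{m,p,i,0}=\cdots=f_{m,p,i,q-1}$, while the second case of the theorem is literally the Truncation statement. So the whole burden lies in the Symmetry, part (3), and the plan is to convert it into a single identity in the ring $R:=\mathbb{Z}[x]/(x^m-1)$, where the four operations become transparent.

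First I would record the dictionary in $R$: on a representative of degree $<m$ one has $\mathcal{R}_sg\equiv x^{-s}g$ and $\mathcal{F}g\equiv x^{-1}\sigma(g)$, where $\sigma\colon x\mapsto x^{-1}$ is the reciprocal automorphism, and consequently $\mathcal{F}\mathcal{R}_s=\mathcal{R}_{-s}\mathcal{F}$. Since both $f_{m,p,i',0}$ and $\mathcal{R}_{\varphi(m)-1-r}\mathcal{F}f_{m,p,i,0}$ have degree $<m$, it suffices to prove their equality in $R$. Substituting the explicit formula (writing $t_i:=\mathcal{T}_{i+1}\Phi_m=\sum_{k=0}^{i}c_kx^{k}$ with $c_k:=[x^k]\Phi_m$, and $i'=\varphi(m)-1-i$), cancelling the common sign, and pushing $\mathcal{F}$ past $\mathcal{R}_{ir}$, the Symmetry reduces to
\[
x^{-i'r}\,\Psi_m\,\mathcal{E}_r t_{i'}\;=\;x^{(i+1)r-\varphi(m)}\,\sigma(\Psi_m)\,\sigma(\mathcal{E}_r t_i)\qquad\text{in }R .
\]
At this stage I would invoke the palindromic symmetry $c_k=c_{\varphi(m)-k}$ of $\Phi_m$ together with the induced anti-palindromic identity $\sigma(\Psi_m)=-x^{\varphi(m)}\Psi_m$ for the inverse cyclotomic polynomial.

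The crux is a reindexing. Moving everything to one side and factoring out $\Psi_m$, the difference equals $\Psi_m$ times $\sum_{k=0}^{i'}c_kx^{(k-i')r}+\sum_{k=0}^{i}c_kx^{(i+1-k)r}$. The palindrome $c_k=c_{\varphi(m)-k}$ makes both sums carry the coefficient $c_{i+1-n}$ at the power $x^{nr}$, with $n$ running over $-i',\dots,0$ in the first sum and $1,\dots,i+1$ in the second; because $i+i'=\varphi(m)-1$ these two index ranges abut exactly (no gap, no overlap), so together they reassemble the complete polynomial $\sum_{k=0}^{\varphi(m)}c_kx^{(i+1-k)r}=x^{(i+1)r}\Phi_m(x^{-r})$. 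This is the one place where the hypothesis $i'+i=\varphi(m)-1$ is used. Applying $\Phi_m(x^{-r})=x^{-r\varphi(m)}\Phi_m(x^r)$ in $R$, the entire difference becomes a unit times $\Psi_m(x)\,\Phi_m(x^r)$.

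It therefore remains to prove the key lemma $\Psi_m(x)\,\Phi_m(x^r)\equiv 0 \pmod{x^m-1}$, which I expect to be the main obstacle. I would argue by evaluation at the $m$-th roots of unity: since $\gcd(r,m)=1$, the map $\zeta\mapsto\zeta^{r}$ preserves multiplicative order, so $\Phi_m(\zeta^{r})=0$ exactly when $\zeta$ is a primitive $m$-th root of unity, whereas $\Psi_m(\zeta)=\prod_{d\mid m,\,d\neq m}\Phi_d(\zeta)$ vanishes at every non-primitive $m$-th root; hence the product vanishes at all roots of the separable polynomial $x^m-1$, and since $x^m-1$ is monic the divisibility holds over $\mathbb{Z}[x]$. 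This forces the difference above to be zero in $R$ and completes the Symmetry. (The degenerate case $q=0$, in which the first case of Theorem~\ref{thm:explicit} must be read through the truncation $\mathcal{T}_r$, follows from the same computation after one further application of $\mathcal{T}_r$.)
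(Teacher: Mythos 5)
Your proof is correct, and parts (1) and (2) are handled exactly as in the paper. For the Symmetry, your overall strategy coincides with the paper's in its essential ingredients --- the explicit formula, the palindromy $a_s=a_{\varphi(m)-s}$ of $\Phi_m$ together with the anti-palindromy of $\Psi_m$, and the cancellation $\Psi_m\cdot\mathcal{E}_r\Phi_m\equiv 0\pmod{x^m-1}$, which is precisely the paper's Lemma~\ref{lem:cancel} --- but the packaging differs in two ways worth noting. First, you work throughout in $\mathbb{Z}[x]/(x^m-1)$, where $\mathcal{R}_s$, $\mathcal{F}$ and $\mathcal{E}_r$ become multiplication by units and the substitution $x\mapsto x^{-1}$; the paper instead carries out a longer chain of operator manipulations justified step by step via Lemma~\ref{lem:operation}. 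Your route makes the commutation $\mathcal{F}\mathcal{R}_s=\mathcal{R}_{-s}\mathcal{F}$ and the reindexing transparent, at the mild cost of having to observe once that two degree-$<m$ representatives agreeing in the quotient ring are equal; the paper's route stays entirely at the level of the defined operations. Second, you prove the key cancellation by evaluating $\Psi_m(\zeta)\Phi_m(\zeta^r)$ at all $m$-th roots of unity and using that $\gcd(r,m)=1$ preserves multiplicative order, whereas the paper derives it algebraically from $\Phi_m(x^r)\equiv\Phi_m(x^p)$, $\Phi_m(x^p)=\Phi_m\Phi_{mp}$ and $\Psi_m\Phi_m=x^m-1$. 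Both arguments are sound; the paper's is purely formal and self-contained in $\mathbb{Z}[x]$, while yours is shorter but passes through $\mathbb{C}$ (harmlessly, since $x^m-1$ is monic and separable). Your closing remark about the case $q=0$ flags an edge case that the paper's statement of Theorem~\ref{thm:explicit} also leaves implicit, so no fault attaches to either treatment there.
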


\begin{remark}
The \textquotedblleft repetition\textquotedblright\ structure was observed by
Arnold and Monaga \cite{AM2} in the context of computing cyclotomic
polynomials. In particular, Algorithm 7 in their paper exploited the
repetition structure to improve the time and space complexity when $p\gg m$.
\end{remark}

\begin{theorem}
[Inter-Structure]\ \label{thm:inter-str} Among cyclotomic polynomials, we have

\begin{enumerate}
\item (Invariance)\ \ \ \ \ \ \ \ \ $f_{m,\tilde{p},i,0}=~~f_{m,p,i,0}$
\hspace{4.8em} if $\tilde{p}-p\equiv_{m}0$

\item (Semi-Invariance)\ \ $f_{m,\tilde{p},i,0}=-\mathcal{R}_{\varphi
(m)-1}\mathcal{F}f_{m,p,i,0}\;\;$ if $\tilde{p}+p\equiv_{m}0$
\end{enumerate}
\end{theorem}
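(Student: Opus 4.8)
The plan is to deduce both parts directly from the explicit formula of Theorem~\ref{thm:explicit}, specialized to $j=0$:
\[
f_{m,p,i,0}=-\mathcal{R}_{ir}(\Psi_{m}\cdot\mathcal{E}_{r}\mathcal{T}_{i+1}\Phi_{m}),\qquad r=\rem(p,m).
\]
The decisive observation is that the right-hand side depends on $p$ only through $r$; I abbreviate $g_i:=\mathcal{T}_{i+1}\Phi_m$, which depends on $i$ alone. For \emph{Invariance}, the hypothesis $\tilde{p}-p\equiv_m 0$ forces $\tilde r:=\rem(\tilde p,m)=r$, so the two formulas agree term for term and $f_{m,\tilde p,i,0}=f_{m,p,i,0}$ is immediate. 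For \emph{Semi-Invariance}, the hypothesis $\tilde p+p\equiv_m 0$ together with $\gcd(p,m)=1$ (hence $r\neq0$) gives $\tilde r=m-r$, and the claim becomes the symmetry of the formula under $r\mapsto m-r$, namely $-\mathcal{R}_{i(m-r)}(\Psi_m\cdot\mathcal{E}_{m-r}g_i)=-\mathcal{R}_{\varphi(m)-1}\mathcal{F}f_{m,p,i,0}$.

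I would prove this identity by calculating inside $\mathbb{Z}[x]/(x^m-1)$. The first step is a well-definedness check: $\mathcal{E}_s$, $\mathcal{F}$ and multiplication by $\Psi_m$ all descend to this ring, and $\mathcal{R}_s(h)$ depends only on $h\bmod(x^m-1)$, so the whole argument of the outer $\mathcal{R}$ may be reduced modulo $x^m-1$ even though the operations were introduced for polynomials of degree $<m$. Next I record a short list of operator identities, each verified by tracking the action on $x^k$ as an index permutation of $\mathbb{Z}/m\mathbb{Z}$. Writing $\rho\colon h\mapsto h(x^{-1})$ for the pure reflection (so $\rho=\mathcal{E}_{m-1}$ on the ring, and $\rho$ is multiplicative), the needed relations are $\mathcal{F}=\mathcal{R}_1\rho$, $\;\mathcal{F}\mathcal{R}_s=\mathcal{R}_{-s}\mathcal{F}$, $\;\mathcal{E}_r\rho=\rho\mathcal{E}_r$, $\;\mathcal{E}_r\mathcal{R}_1=\mathcal{R}_r\mathcal{E}_r$, together with the fact that $\mathcal{R}_s$ acts as multiplication by $x^{-s}$ and so may be moved onto either factor of a product. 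The one cyclotomic input is the anti-palindromicity of the inverse cyclotomic polynomial: from $\Phi_m(1/x)=x^{-\varphi(m)}\Phi_m(x)$ and $\Psi_m\Phi_m=x^m-1$ one obtains $\rho\Psi_m=-\mathcal{R}_{m-\varphi(m)}\Psi_m$, and this is exactly where the minus sign in the statement is born.

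With these in hand the argument is bookkeeping, and I would run it from both ends. On the right-hand side I push $\mathcal{F}$ through $\mathcal{R}_{ir}$, expand $\mathcal{F}=\mathcal{R}_1\rho$, distribute $\rho$ over the product, and substitute $\rho\Psi_m=-\mathcal{R}_{m-\varphi(m)}\Psi_m$ and $\rho\mathcal{E}_rg_i=\mathcal{E}_r\rho g_i$; after collecting rotation indices modulo $m$ the right-hand side collapses to $-\mathcal{R}_{-ir}(\Psi_m\cdot\mathcal{E}_r\rho g_i)$. On the left-hand side I use $i(m-r)\equiv-ir$ and the single identity $\mathcal{E}_{m-r}=\mathcal{R}_{-r}\mathcal{E}_r\mathcal{F}$ to reduce it to $-\mathcal{R}_{-(i+1)r}(\Psi_m\cdot\mathcal{E}_r\mathcal{F}g_i)$. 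The two sides then coincide because $\mathcal{R}_{-r}\mathcal{E}_r\mathcal{F}=\mathcal{E}_r\rho$ as operators (a one-line consequence of $\mathcal{F}=\mathcal{R}_1\rho$ and $\mathcal{E}_r\mathcal{R}_1=\mathcal{R}_r\mathcal{E}_r$), so the truncation $g_i$ is carried along uniformly and never has to be reflected explicitly. I expect the main obstacle to be precisely this index bookkeeping: confirming that the accumulated rotation exponents reduce modulo $m$ to the advertised $\varphi(m)-1$, and pairing it with the well-definedness check that lets the degree-$<m$ identities be applied after reduction modulo $x^m-1$.
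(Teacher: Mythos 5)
Your proposal is correct and follows essentially the same route as the paper: both parts are read off from the explicit formula of Theorem~\ref{thm:explicit}, with invariance being immediate from $\tilde r=r$ and semi-invariance coming from $\tilde r=m-r$ combined with the anti-palindromic identity $\Psi_m=-x^{m-\varphi(m)}\Psi_m(x^{-1})$ and rotation bookkeeping modulo $x^m-1$. Your packaging of the bookkeeping as operator identities in $\mathbb{Z}[x]/(x^m-1)$ (with $\mathcal{F}=\mathcal{R}_1\rho$, etc.) is only a presentational variant of the paper's chain of equalities, and your index computations check out.
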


\begin{remark}
The \textquotedblleft invariance\textquotedblright\ structure was observed by
Kaplan \cite{KA1,KA2} in the context of studying flat cyclotomic polynomials.
In particular, The proof of  \cite[Theorem 4]{KA2} used invariance structure.
Furthermore, the proof of  \cite[Theorem 3]{KA1} used the semi-invariance
structure for $\Phi_{p_{1}p_{2}p_{3}}$.
\end{remark}

\noindent Now we present a set of examples to illustrate the structural theorems.

\begin{example}
[Repetition]\input{example_str_repetition}
\end{example}

\begin{example}
[Truncation]\input{example_str_truncation}
\end{example}

\begin{example}
[Symmetry]\input{example_str_symmetry}
\end{example}

\begin{example}
[Invariance]\input{example_str_invariance}
\end{example}

\begin{example}
[Semi-Invariance]\input{example_str_semi_invariance}
\end{example}

\section{Proof of Block Structures}

In this section, we prove all the theorems given in
the previous section.

\begin{lemma}
\label{lem:operation} Let $u,v$ be integers. For polynomials $f$ and
$g=\sum_{t\geq0}c_{t}x^t, $ with degree less than $m$, we have

\begin{enumerate}
\item $\mathcal{R}_{u-v}f=\mathcal{R}_{u}\left(  x^{\mathrm{rem}%
(v,m)}f\right)  $

\item $\sum_{t}c_{t}\mathcal{R}_{u-tv}f=\mathcal{R}_{u}\left(  f\cdot
\mathcal{E}_{v}\sum_{t} c_{t}x^{t}\right)  $

\end{enumerate}
\end{lemma}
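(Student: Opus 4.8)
The plan is to recast the ``Rotate'' operator purely as multiplication inside the quotient ring $R=\mathbb{Z}[x]/(x^m-1)$, where the exponent bookkeeping collapses. First I would record that $x$ is a unit in $R$, since $x\cdot x^{m-1}=x^m\equiv 1$; hence $x^k$ is defined for every $k\in\mathbb{Z}$, satisfies $x^jx^k=x^{j+k}$, and depends only on $k\bmod m$, so that $x^k=x^{\mathrm{rem}(k,m)}$ in $R$ for every integer $k$ (this is exactly where the unit property does its work, including for negative $k$). With this in hand, the definition $\mathcal{R}_s g=\mathrm{rem}(x^{m-\mathrm{rem}(s,m)}g,\,x^m-1)$ says precisely that $\mathcal{R}_s g$ is the unique polynomial of degree less than $m$ whose class in $R$ equals $x^{m-\mathrm{rem}(s,m)}g=x^{-s}g$. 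I would isolate this as the single working fact, valid for every polynomial $g$ (of any degree) and every integer $s$: in $R$ one has $\mathcal{R}_s g=x^{-s}g$, and $\deg\mathcal{R}_s g<m$.

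Then Part 1 is a one-line computation in $R$: the working fact gives $\mathcal{R}_{u-v}f=x^{-(u-v)}f=x^{-u}x^{v}f=x^{-u}x^{\mathrm{rem}(v,m)}f=\mathcal{R}_u\!\left(x^{\mathrm{rem}(v,m)}f\right)$, where the penultimate equality uses $x^{v}=x^{\mathrm{rem}(v,m)}$ in $R$. Since the first and last expressions are both polynomials of degree less than $m$ and have equal class in $R$, they coincide as polynomials, which is the claim.

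For Part 2 I would expand the left-hand side in $R$ using linearity together with the definition of $\mathcal{E}$: $\sum_t c_t\mathcal{R}_{u-tv}f=x^{-u}f\sum_t c_t x^{tv}=x^{-u}f\sum_t c_t\!\left(x^{\mathrm{rem}(v,m)}\right)^{t}=x^{-u}f\cdot\mathcal{E}_v\!\big(\sum_t c_t x^t\big)=\mathcal{R}_u\!\big(f\cdot\mathcal{E}_v\sum_t c_t x^t\big)$, using $x^{tv}=(x^{\mathrm{rem}(v,m)})^{t}$ in $R$ and the fact that $\mathcal{E}_v$ is the substitution $x\mapsto x^{\mathrm{rem}(v,m)}$. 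I close by upgrading this equality of classes to a polynomial equality.

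The computations are genuinely routine; the one step demanding care---and the one I would flag as the crux---is the passage from equality of classes in $R$ (equivalently, congruence modulo $x^m-1$) to literal polynomial equality. This is legitimate only because both sides are, independently, known to have degree less than $m$. For Part 2 this needs the small but essential remark that a $\mathbb{Z}$-linear combination of the degree-$<m$ polynomials $\mathcal{R}_{u-tv}f$ again has degree less than $m$, so no reduction is silently lost, while the right-hand side $\mathcal{R}_u(\cdots)$ has degree less than $m$ by definition of $\mathcal{R}_u$. I would also keep the notation $x^{-s}$ strictly confined to $R$ (never treated as an honest polynomial), so that every manipulation above is a genuine identity of ring elements.
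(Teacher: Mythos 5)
Your proof is correct and follows essentially the same route as the paper's: both arguments come down to exponent arithmetic modulo $x^m-1$, which you package as multiplication by the unit $x^{-s}$ in $\mathbb{Z}[x]/(x^m-1)$ while the paper manipulates $\mathrm{rem}(\cdot\,,x^m-1)$ directly. Your explicit remarks --- that $\mathcal{R}_s$ must be read as acting on polynomials of arbitrary degree, and that both sides have degree less than $m$ so congruence upgrades to polynomial equality --- are exactly the points the paper's terse computation leaves implicit.
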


\begin{proof} We prove each one by one.
\begin{enumerate}
\item Immediate from Notation~\ref{not:operation}.

\item Note
\begin{align*}
\sum_{t}c_{t}\mathcal{R}_{u-tv}f  &  = \mathrm{rem}\left(  \sum_{t}%
c_{t}x^{m-\mathrm{rem}\left(  u-tv,m\right)  }f,\ x^{m}-1\right)  &  & \\
&  = \mathrm{rem}\left(  x^{m-\mathrm{rem}(u,m)}f\sum_{t}c_{t}x^{t\ \mathrm{rem}%
\left(  v,m\right)  },\ x^{m}-1\right)  &  & \\
&  = \mathcal{R}_{u}\left(  f\cdot\mathcal{E}_{v}\sum_{t}c_{t}x^{t}\right)  &
\end{align*}

\end{enumerate}
\end{proof}

\subsection{Proof of Theorem~\ref{thm:explicit} (Explicit)}

\begin{lemma}
\label{lemma:phi_psi} We have%
\[
\Phi_{mp}=-\ \Phi_{m}\left(  x^{p}\right)  \ G
\]
where%
\[
G=\Psi_{m}\ \sum_{u\geq0}x^{um}%
\]

\end{lemma}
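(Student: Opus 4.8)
The plan is to reduce the identity to the classical cyclotomic relation for a prime not dividing the conductor, followed by a one-line geometric-series computation. Everything will take place in the ring of formal power series $\mathbb{Z}[[x]]$; this is legitimate because $\Phi_m(0)=1$, so $\Phi_m$ (and hence $\Psi_m$) is a unit in $\mathbb{Z}[[x]]$ and $1/\Phi_m$ makes sense as a power series.

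First I would recall the standard identity: since $p$ is prime and $p\nmid m$,
\[
\Phi_{mp}(x)=\frac{\Phi_m(x^p)}{\Phi_m(x)},
\]
which can be obtained by matching the factors of $x^{mp}-1=\prod_{d\mid mp}\Phi_d$ against those of $\Phi_m(x^p)$, whose roots are exactly the $p$-th roots of the primitive $m$-th roots of unity (these split into the primitive $mp$-th and the primitive $m$-th roots of unity). Next I would rewrite the claimed factor $G$ using the definition $\Psi_m=(x^m-1)/\Phi_m$ together with the geometric series $\sum_{u\ge0}x^{um}=1/(1-x^m)$, giving
\[
G=\Psi_m\sum_{u\ge0}x^{um}=\frac{x^m-1}{\Phi_m}\cdot\frac{1}{1-x^m}=-\frac{1}{\Phi_m}.
\]
Substituting this into the right-hand side yields
\[
-\Phi_m(x^p)\,G=\Phi_m(x^p)\cdot\frac{1}{\Phi_m}=\frac{\Phi_m(x^p)}{\Phi_m}=\Phi_{mp},
\]
which is exactly the asserted equality.

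I do not expect a genuine obstacle here; the whole content sits in the classical relation $\Phi_{mp}=\Phi_m(x^p)/\Phi_m$. The only points requiring care are bookkeeping ones: justifying that the manipulations are valid in $\mathbb{Z}[[x]]$ (invertibility of $\Phi_m$, the cancellation of the factor $1-x^m$), and noting that although $G$ is a genuine power series, the product $\Phi_m(x^p)/\Phi_m$ is in fact a polynomial, since $\Phi_m$ divides $\Phi_m(x^p)$ in $\mathbb{Z}[x]$. If one prefers to avoid power series entirely, the same computation can be read as an identity of rational functions and then cleared of denominators: multiplying through by $\Phi_m\,(1-x^m)$ reduces the claim to the polynomial identity $\Phi_{mp}\,\Phi_m\,(1-x^m)=-\Phi_m(x^p)\,(x^m-1)$, which is immediate from the classical relation after dividing by $1-x^m$.
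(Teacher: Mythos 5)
Your proposal is correct and follows essentially the same route as the paper: both start from the classical identity $\Phi_{mp}=\Phi_m(x^p)/\Phi_m$, substitute $1/\Phi_m=\Psi_m/(x^m-1)$, and expand $1/(1-x^m)$ as the formal geometric series $\sum_{u\ge 0}x^{um}$. Your added remarks on invertibility in $\mathbb{Z}[[x]]$ only make explicit what the paper leaves implicit.
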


\begin{proof}
Note%
\begin{align*}
\Phi_{mp}  &  =~~\frac{\Phi_{m}\left(  x^{p}\right)  }{\Phi_{m} } &  &
\text{from }p\nmid m\\
&  =~~\Phi_{m}\left(  x^{p}\right)  \ \frac{\Psi_{m} }{x^{m}-1} &  &
\text{}\\
&  =-\Phi_{m}\left(  x^{p}\right)  \ \Psi_{m} \ \frac{1}{1-x^{m}} &  &
\text{by rearranging}\\
&  =-\ \Phi_{m}\left(  x^{p}\right)  \ \Psi_{m} \ \sum_{u\geq0}x^{um} &  &
\text{by carrying out a formal expansion of }\frac{1}{1-x^{m}}\\
&  =-\ \Phi_{m}\left(  x^{p}\right)  \ G &  &
\end{align*}

\end{proof}

\begin{notation}
\label{notation:gij}Let
\[
G=\Psi_{m}\sum_{u\geq0}x^{um}=\sum_{t\geq0}e_{t}x^{t}
\]
For $0\leq i\leq\varphi(m)-1$ and $0\leq j\leq q,$ let
\[
g_{m,p,i,j}=\sum_{k=0}^{l}e_{ip+mj+k}x^{k}%
\]
where if $j<q\ $then $l=m-1$ else $l=$ $r-1$.
\end{notation}

\begin{lemma}
\label{lem:g} For all $0\leq i\leq\varphi(m)-1$, we have

\begin{enumerate}
\item $g_{m,p,i,0}=\cdots=g_{m,p,i,q-1}=\mathcal{R}_{ir}\Psi_{m}$

\item $g_{m,p,i,q}=\mathcal{T}_{r}\ g_{m,p,i,0}$
\end{enumerate}
\end{lemma}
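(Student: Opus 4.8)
The plan is to reduce everything to one structural observation: the coefficient sequence of $G$ is periodic with period $m$. First I would record that $\deg\Psi_m = m-\varphi(m) < m$. Hence, in the expansion $G=\sum_{u\ge0}x^{um}\Psi_m$ from Notation~\ref{notation:gij}, the successive shifted copies $x^{um}\Psi_m$ occupy the disjoint exponent ranges $[um,(u+1)m-1]$ and never overlap. Writing $\Psi_m=\sum_{s}\psi_s x^s$ (so $\psi_s=0$ once $s>m-\varphi(m)$), this yields the clean closed form $e_t=\psi_{\mathrm{rem}(t,m)}$ for every $t\ge0$, and in particular the periodicity $e_{t+m}=e_t$.

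For the first claim I would extract the coefficient of $x^k$ in $g_{m,p,i,j}$ for $0\le j\le q-1$. By definition it equals $e_{ip+mj+k}$, and since $mj$ is a multiple of $m$, periodicity collapses this to $e_{ip+k}$, which is independent of $j$; this alone gives $g_{m,p,i,0}=\cdots=g_{m,p,i,q-1}$. To identify the common value with $\mathcal{R}_{ir}\Psi_m$, I would rewrite $e_{ip+k}=\psi_{\mathrm{rem}(ip+k,m)}=\psi_{\mathrm{rem}(ir+k,m)}$, using $ip\equiv ir\pmod m$, which follows from $p\equiv r\pmod m$. On the other side, unwinding $\mathcal{R}_{ir}\Psi_m=\mathrm{rem}\!\left(x^{\,m-\mathrm{rem}(ir,m)}\Psi_m,\ x^m-1\right)$ shows that reduction modulo $x^m-1$ sends the exponent $t$ of $\Psi_m$ to $\mathrm{rem}(t-\mathrm{rem}(ir,m),m)$; because $\deg\Psi_m<m$, each residue class is represented by exactly one term of $\Psi_m$, so the coefficient of $x^k$ in $\mathcal{R}_{ir}\Psi_m$ is exactly $\psi_{\mathrm{rem}(k+ir,m)}$. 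The two coefficient formulas agree for all $0\le k\le m-1$, establishing $g_{m,p,i,0}=\mathcal{R}_{ir}\Psi_m$.

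The second claim is then immediate: $g_{m,p,i,q}$ is assembled from the very same coefficients $e_{ip+mq+k}=e_{ip+k}$, but with $k$ restricted to the range $0\le k\le r-1$. Thus it is precisely the initial segment $\mathcal{T}_r g_{m,p,i,0}=\mathrm{rem}(g_{m,p,i,0},x^r)$ of the degree-$(m-1)$ block computed above.

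I expect the only delicate point to be the bookkeeping in the rotation step: one must track carefully how $\mathcal{R}$ permutes exponents modulo $m$ and confirm that, because $\deg\Psi_m<m$, no two surviving terms of $\Psi_m$ land in the same residue class under reduction by $x^m-1$, so there are no coefficient collisions. Once the single periodicity identity $e_t=\psi_{\mathrm{rem}(t,m)}$ is in hand, the rest is routine remainder arithmetic together with the congruence $p\equiv r\pmod m$.
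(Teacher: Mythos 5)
Your proof is correct and follows essentially the same route as the paper: both rest on the periodicity $e_t=b_{\mathrm{rem}(t,m)}$ (valid because $\deg\Psi_m<m$) together with $ip\equiv ir\pmod m$, reducing everything to the identity $g_{m,p,i,j}=\sum_{k=0}^{m-1}b_{\mathrm{rem}(ir+k,m)}x^k$. The only difference is cosmetic: you identify this sum with $\mathcal{R}_{ir}\Psi_m$ by directly computing how rotation permutes exponents, whereas the paper re-indexes the sum and invokes Lemma~\ref{lem:operation}.
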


\begin{proof}
 Let $0\leq j\leq q.$ Let $\Psi_{m}=\sum_{s\geq0}b_{s}x^{s}.$ Since $\deg
\Psi_{m}<m,$ we see immediately that $e_{t}=b_{{\mathrm{rem}}\left(
t,m\right)  }$ for $0\leq t$. We consider two cases:

\begin{enumerate}
\item $j<q$%

\begin{align*}
g_{m,p,i,j}  &  =\sum_{k=0}^{m-1}e_{ip+mj+k}\ x^{k} &  &  ~~\text{from
Notation \ref{notation:gij}}\\
&  =\sum_{k=0}^{m-1}b_{{\mathrm{rem}}(ir+k,m)}\ x^{k} &  &  ~~\text{since
}e_{ip+jm+k}=b_{{\mathrm{rem}}\left(  ip+jm+k,m\right)  }=b_{{\mathrm{rem}%
}(ir+k,m)}\\
&  =\sum_{s=0}^{m-1}b_{s}\ x^{{\mathrm{rem}}(s+i\left(  m-r\right)  ,m)} &  &
~~\text{by re-indexing }k\text{ with }s={\mathrm{rem}}(ir+k,m)\\
&  &  &  ~~\text{which can be easy shown to be a bijection }\mathbb{N}_{\leq
m-1}\rightarrow\mathbb{N}_{\leq m-1}\\
&  &  &  ~~\text{with the inverse map }k={\mathrm{rem}}(s+i\left(  m-r\right)
,m)\\
&  =\sum_{s=0}^{m-1}b_{s}\ {\mathrm{rem}}\left(  x^{s+i\left(  m-r\right)
},\ x^{m}-1\right)  &  &  ~~\text{since }x^{{\mathrm{rem}}(\square
,m)}={\mathrm{rem}}\left(  x^{\square},x^{m}-1\right) \\
&  = \sum_{s=0}^{m-1}b_{s}\mathcal{R}_{ir-s}\cdot1 &  &  \text{from Notation
\ref{not:operation}}\\
&  = \mathcal{R}_{ir}\left(  \mathcal{E}_{1} \mathcal{T}_{m} \Psi_{m}\right)
&  &  \text{from Lemma}~\ref{lem:operation}\\
&  =\mathcal{R}_{ir}\Psi_{m} &  &  \text{since }\deg(\Psi_{m})<m ~~
\end{align*}

\item $j=q$
\begin{align*}
g_{m,p,i,q}  &  =\sum_{k=0}^{r-1}e_{ip+mq+k}\ x^{k} &  &  ~~\text{from
Notation \ref{notation:gij}}\\
&  =\sum_{k=0}^{r-1}b_{{\mathrm{rem}}(ir+k,m)}\ x^{k} &  &  ~~\text{since
}e_{ip+jm+k}=b_{{\mathrm{rem}}\left(  ip+jm+k,m\right)  }=b_{{\mathrm{rem}%
}(ir+k,m)}\\
&  =\mathcal{T}_{r}\ g_{m,p,i,0} &  &  ~~\text{from the second line in the
previous case.}%
\end{align*}

\end{enumerate}
\end{proof}

\begin{lemma}
\label{lem:f_g} For all $0\leq i\leq\varphi(m)-1$ and $0\leq j\leq q$, we have%
\[
f_{m,p,i,j}=-\sum_{s=0}^{i}a_{s}g_{m,p,\left(  i-s\right)  ,j}%
\]
where $\Phi_{m}=\sum_{s\geq0}a_{s}x^{s}.$
\end{lemma}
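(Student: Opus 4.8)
The plan is to read the blocks $f_{m,p,i,j}$ directly off the factorization $\Phi_{mp}=-\,\Phi_m(x^p)\,G$ from Lemma~\ref{lemma:phi_psi} by comparing coefficients at the exponents that Notation~\ref{notation:partition} assigns to the $(i,j)$-block. Writing $[h]_k$ for the coefficient of $x^k$ in a polynomial $h$, I would first expand $\Phi_m(x^p)=\sum_{s\ge 0}a_s x^{sp}$, so that $\Phi_{mp}=-\sum_{s\ge 0}a_s x^{sp}\,G$. With $G=\sum_{t\ge 0}e_t x^t$ as in Notation~\ref{notation:gij}, reading off the coefficient of $x^N$ gives the convolution
\[
[\Phi_{mp}]_N=-\sum_{s\ge 0}a_s\,e_{N-sp},
\]
under the convention $e_t=0$ for $t<0$.

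Next I would specialize to $N=ip+mj+k$ with $0\le k\le l$, where $l=m-1$ if $j<q$ and $l=r-1$ if $j=q$. The key bookkeeping fact is that $0\le mj+k<p$ for these ranges, since $p=qm+r$: when $j<q$ one has $mj+k\le m(q-1)+(m-1)=qm-1<p$, and when $j=q$ one has $mj+k\le qm+(r-1)=p-1$. This single inequality does two jobs at once. On the left it guarantees that the exponent $ip+mj+k$ lies inside the $(i,j)$-block of the mixed-radix partition, so that $[\Phi_{mp}]_{ip+mj+k}=[f_{m,p,i,j}]_k$ with no carrying between $x^p$- or $x^m$-blocks. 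On the right, substituting $N-sp=(i-s)p+mj+k$ identifies $e_{(i-s)p+mj+k}$ with $[g_{m,p,(i-s),j}]_k$ exactly in the range $0\le k\le l$, by the definition of $g_{m,p,(i-s),j}$ in Notation~\ref{notation:gij}.

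I would then collapse the summation range to $0\le s\le i$: for $s>i$ the index satisfies $(i-s)p+mj+k\le -p+(p-1)<0$, so $e_{(i-s)p+mj+k}=0$, while $a_s=0$ for $s<0$. Hence for every $0\le k\le l$,
\[
[f_{m,p,i,j}]_k=-\sum_{s=0}^{i}a_s\,[g_{m,p,(i-s),j}]_k.
\]
Since $f_{m,p,i,j}$ has degree at most $l$ (using $\deg f_{m,p,i}<p$ to force the top coefficients to vanish when $j=q$) and each $g_{m,p,(i-s),j}$ has degree at most $l$ by construction, matching all coefficients $0\le k\le l$ upgrades this to the polynomial identity $f_{m,p,i,j}=-\sum_{s=0}^{i}a_s\,g_{m,p,(i-s),j}$.

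The coefficient convolution itself is routine; the only real care lies in the index arithmetic of the second paragraph, namely verifying $mj+k<p$ so that $ip+mj+k$ lands in the intended block, and confirming that the terms with $s>i$ genuinely vanish so that the upper limit $i$ is sharp. I expect this index management, where the mixed-radix structure and the two block sizes $m$ and $r$ must be reconciled, to be the main (though mild) obstacle of the argument.
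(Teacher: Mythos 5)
Your proposal is correct and follows essentially the same route as the paper: both start from $\Phi_{mp}=-\,\Phi_m(x^p)\,G$ of Lemma~\ref{lemma:phi_psi} and convolve $\sum_s a_s x^{sp}$ against the blocks $g_{m,p,i,j}$ of Notation~\ref{notation:gij}, with the same index bookkeeping ($mj+k<p$, vanishing of terms with $s>i$) and the same degree bounds identifying the result with the partition of Notation~\ref{notation:partition}. The only difference is presentational: you compare coefficients one exponent at a time, whereas the paper regroups the product as polynomial blocks and invokes uniqueness of the partition.
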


\begin{proof}
Note%
\begin{align*}
\Phi_{mp}  &  =-\Phi_{m}\left(  x^{p}\right)  \ G &  &  \text{from Lemma
\ref{lemma:phi_psi}}\\
&  =-\left(\sum_{s\geq0}a_{s}x^{sp}\right)~G &  &  \text{from }\Phi_{m}=\sum_{s\geq
0}a_{s}x^{s}\\
&  =-\sum_{s\geq0}a_{s}x^{sp}~~\sum_{k\geq0}e_{k}x^{k} &  &  G=\sum_{k\geq
0}e_{k}x^{k}\\
&  =-\sum_{s\geq0}a_{s}x^{sp}~~\sum_{i\geq0}\sum_{j=0}^{q}g_{m,p,i,j}%
\ x^{jm}\ x^{ip} &  &  \text{from Notation~\ref{notation:gij}}\\
&  =-\sum_{s\geq0}\sum_{i\geq0}\sum_{j=0}^{q}a_{s}\ g_{m,p,i,j}\ x^{jm+\left(
s+i\right)  p} &  &  \text{by collecting the exponents of }x\\
&  =-\sum_{i\geq0}\sum_{\substack{s,i\geq0\\s+\bar{s}=i}}\sum_{j=0}^{q}%
a_{s}g_{m,p,\bar{s},j}\ x^{jm+ip} &  &  \text{by re-indexing }\\
&  =-\sum_{i\geq0}\sum_{s=0}^{i}\sum_{j=0}^{q}a_{s}g_{m,p,\left(  i-s\right)
,j}x^{jm+ip} &  &  \text{by re-indexing and }\bar{s}=i-s\\
&  =-\sum_{i\geq0}\sum_{j=0}^{q}\sum_{s=0}^{i}a_{s}g_{m,p,\left(  i-s\right)
,j}x^{jm+ip} &  &  \text{by changing the summation order}\\
&  =\sum_{i\geq0}\left(  \sum_{j=0}^{q}\left(  -\sum_{s=0}^{i}a_{s}%
g_{m,p,\left(  i-s\right)  ,j}\right)  x^{jm}\right)  x^{ip} &  &  \text{by
grouping }%
\end{align*}
Recall that $\deg~ g_{m,p,i,j}<m.$ Thus%
\[
\deg~\sum_{s=0}^{i}a_{s}g_{m,p,\left(  i-s\right)  ,j}<m
\]
Furthermore $\deg$ $g_{m,p,i,q}<r.$ Recall that $p=qm+r.$ Thus%
\[
\deg\sum_{j=0}^{q}\left(  -\sum_{s=0}^{i}a_{s}g_{m,p,\left(  i-s\right)
,j}\right)  x^{jm}<p
\]
Thus finally from Notation \ref{notation:partition}, we have%
\[
f_{m,p,i,j}=-\sum_{s=0}^{i}a_{s}g_{m,p,\left(  i-s\right)  ,j}%
\]

\end{proof}

\begin{proof}
[Proof of Theorem~\ref{thm:explicit}]We consider two cases:

\begin{enumerate}
\item $j<q$%
\begin{align*}
f_{m,p,i,j}  &  =-\sum_{s=0}^{i}a_{s}g_{m,p,\left(  i-s\right)  ,j} &  &
\text{from Lemma \ref{lem:f_g}}\\
&  =-\sum_{s=0}^{i}a_{s}\mathcal{R}_{(i-s)r}\Psi_{m} &  &  \text{from Lemma
\ref{lem:g}}\\
&  =- \mathcal{R}_{ir} (\Psi_{m} \cdot\mathcal{E}_{r} \mathcal{T}_{i+1}
\Phi_{m}) &  &  \text{from Lemma}~\ref{lem:operation}%
\end{align*}

\item $j=q$
\begin{align*}
f_{m,p,i,q}  &  =-\sum_{s=0}^{i}a_{s}g_{m,p,\left(  i-s\right)  ,q} & \\
&  = -\sum_{s=0}^{i}a_{s}\mathcal{T}_{r}g_{m,p,\left(  i-s\right)  ,0} & \\
&  =-\mathcal{T}_{r}\sum_{s=0}^{i}a_{s}g_{m,p,\left(  i-s\right)  ,0} &
\text{from Lemma \ref{lem:g}}\\
&  =~~\mathcal{T}_{r}f_{m,p,i,0} & \text{from Lemma \ref{lem:f_g}}%
\end{align*}

\end{enumerate}
\end{proof}

\subsection{Proof of Theorem~\ref{thm:intra-str} (Intra-Structure)}

\begin{lemma}
\label{lem:cancel} For an integer $s$, we have $\mathcal{R}_{s}\left(
\Psi_{m}\mathcal{E}_{r}\Phi_{m}\right)  =0$ .

\end{lemma}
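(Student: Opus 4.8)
The plan is to reduce the statement to a single divisibility fact about $\Phi_m$. By Notation~\ref{not:operation}, for any polynomial $g$ we have $\mathcal{R}_s g = \mathrm{rem}(x^{m-\mathrm{rem}(s,m)}g,\ x^m-1)$, and this remainder is $0$ precisely when $x^m-1$ divides $x^{m-\mathrm{rem}(s,m)}g$. Since multiplying a multiple of $x^m-1$ by a power of $x$ leaves it a multiple of $x^m-1$, it is enough to show that $x^m-1$ divides $g=\Psi_m\,\mathcal{E}_r\Phi_m$; the shift by $s$ then plays no role, which is exactly why the statement is claimed for every integer $s$.

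Next I would unwind the two operands. Since $0\le r<m$ we have $\mathrm{rem}(r,m)=r$, so $\mathcal{E}_r\Phi_m=\Phi_m(x^r)$, and by definition $\Psi_m=(x^m-1)/\Phi_m$. Hence
\[
\Psi_m\,\mathcal{E}_r\Phi_m=(x^m-1)\,\frac{\Phi_m(x^r)}{\Phi_m}.
\]
Thus $x^m-1$ divides $\Psi_m\,\mathcal{E}_r\Phi_m$ as soon as $\Phi_m(x^r)/\Phi_m$ is a polynomial, i.e. as soon as $\Phi_m\mid\Phi_m(x^r)$; both polynomials being monic, Gauss's lemma lets me check this over $\mathbb{Q}$.

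The heart of the argument is therefore to prove $\Phi_m\mid\Phi_m(x^r)$. Here I would use that $r=\mathrm{rem}(p,m)$ with $p$ coprime to $m$, so $\gcd(r,m)=\gcd(p,m)=1$. If $\zeta$ is any root of $\Phi_m$, that is, any primitive $m$-th root of unity, then $\zeta^r$ again has order $m/\gcd(r,m)=m$ and so is primitive, whence $\Phi_m(\zeta^r)=0$; equivalently $\zeta$ is a root of $\Phi_m(x^r)$. Since the $\varphi(m)$ primitive $m$-th roots of unity are exactly the roots of $\Phi_m$ and are distinct (so $\Phi_m$ is squarefree), $\Phi_m$ divides any polynomial vanishing at all of them, in particular $\Phi_m(x^r)$. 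Combining the three steps gives $x^m-1\mid\Psi_m\,\mathcal{E}_r\Phi_m$ and hence $\mathcal{R}_s(\Psi_m\,\mathcal{E}_r\Phi_m)=0$.

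I expect the only genuine obstacle to be the divisibility $\Phi_m\mid\Phi_m(x^r)$, and even that is standard once the coprimality $\gcd(r,m)=1$ and the separability of $\Phi_m$ are in hand; the reduction steps are pure bookkeeping with the definitions of $\mathcal{R}_s$, $\mathcal{E}_r$, and $\Psi_m$.
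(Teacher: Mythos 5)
Your proof is correct, but it takes a different route from the paper's. The paper stays entirely inside arithmetic modulo $x^{m}-1$: it replaces $\Phi_{m}(x^{r})$ by $\Phi_{m}(x^{p})$ (legitimate since $x^{p}\equiv x^{r}\pmod{x^{m}-1}$), invokes the factorization $\Phi_{m}(x^{p})=\Phi_{m}\,\Phi_{mp}$ --- which uses that $p$ is a prime not dividing $m$ --- and then collapses $\Psi_{m}\Phi_{m}=x^{m}-1$ to get remainder $0$. You instead prove outright that $x^{m}-1$ divides $\Psi_{m}\,\Phi_{m}(x^{r})$, reducing this to the divisibility $\Phi_{m}\mid\Phi_{m}(x^{r})$ and establishing that by the primitive-root argument: $\gcd(r,m)=1$, so $\zeta\mapsto\zeta^{r}$ permutes the primitive $m$-th roots of unity, and separability of $\Phi_{m}$ finishes it. Your version needs only $\gcd(p,m)=1$ rather than primality of $p$, avoids introducing $\Phi_{mp}$ into the proof, and is arguably more self-contained; the paper's version has the merit of reusing the identity $\Phi_{mp}=\Phi_{m}(x^{p})/\Phi_{m}$ already deployed in Lemma~\ref{lemma:phi_psi} and of never leaving the ring $\mathbb{Z}[x]/(x^{m}-1)$ in which the rotation operator naturally lives. (The two are close cousins, since the factorization the paper cites is itself usually proved by the same roots-of-unity count you use.) The only loose end in your write-up is the degenerate case $r=0$, which under the standing hypothesis $\gcd(p,m)=1$ can occur only for $m=1$ and is harmless.
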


\begin{proof}
Note
\begin{align*}
\mathcal{R}_{s}\left(  \Psi_{m}\mathcal{E}_{r}\Phi_{m}\right)   &  =
~~{\mathrm{rem}}(x^{m-\mathrm{rem}(s,m)}\Psi_{m}\Phi_{m}(x^{r}),x^{m}-1) & \\
&  =~~{\mathrm{rem}}(x^{m-\mathrm{rem}(s,m)}\Psi_{m}\Phi_{m}(x^{p}),x^{m}-1)
&  &  \Phi_{m}(x^{p})\equiv_{x^{m}-1}\Phi_{m}(x^{r})\\
&  =~~{\mathrm{rem}}(x^{m-\mathrm{rem}(s,m)}\Psi_{m}\Phi_{m}\Phi_{mp},x^{m}-1)
&  &  \Phi_{m}(x^{p})=\Phi_{m}\Phi_{mp}\\
&  =~~{\mathrm{rem}}( x^{m-\mathrm{rem}(s,m)}\left(  x^{m}-1\right)  \Phi
_{mp},x^{m}-1) &  &  \Psi_{m}\Phi_{m}=x^{m}-1\\
&  =~~0 &  &  {\mathrm{rem}}( x^{m}-1,x^{m}-1)=0\\
&  &
\end{align*}

\end{proof}

\begin{proof}
[Proof of Theorem~\ref{thm:intra-str}]\ 

\begin{enumerate}
\item {\textbf{Repetition:}} From Theorem~\ref{thm:explicit} we see that
$f_{m,p,i,j}$ does not depend on $j$. Hence
\[
f_{m,p,i,0}=\cdots=f_{m,p,i,q-1}%
\]

\item {\textbf{Truncation:}} From Theorem~\ref{thm:explicit} we see that
$f_{m,p,i,q}={\mathrm{rem}} (f_{m,p,i,0}, x^{r})$. Hence
\[
f_{m,p,i,q}=\mathcal{T}_{r}f_{m,p,i,0}%
\]

\item {\textbf{Symmetry:}} From Theorem~\ref{thm:explicit},
\begin{align*}
f_{m,p,i^{\prime},0}  &  =- \mathcal{R}_{i^{\prime}r}(\Psi_{m}\mathcal{E}%
_{r}\mathcal{T}_{i^{\prime}+1}\Phi_{m}) &  & \\
&  =- \sum_{s=0}^{i^{\prime}}a_{s}\mathcal{R}_{i^{\prime}r-sr}\Psi_{m} & \\
&  =- \mathcal{R}_{i^{\prime}r}\Psi_{m}\cdot\mathcal{E}_{r}\sum_{s=0}%
^{i^{\prime}}a_{s}x^{s} &  &  \text{by Lemma}~\ref{lem:operation}\\
&  =- \mathcal{R}_{i^{\prime}r}\Psi_{m}\cdot\mathcal{E}_{r}\left(  \Phi
_{m}-\sum_{s=i^{\prime}+1}^{\varphi(m)}a_{s}x^{s}\right)  &  & \\
&  =~~ \mathcal{R}_{i^{\prime}r}\left(  \Psi_{m}\cdot\mathcal{E}_{r}%
\sum_{s=i^{\prime}+1}^{\varphi(m)}a_{s}x^{s}\right)  &  &  \text{by Lemma
\ref{lem:cancel}}\\
&  =~~ \mathcal{R}_{(\varphi(m)-i-1)r}\left(  \Psi_{m}\cdot\mathcal{E}_{r}%
\sum_{s=\varphi(m)-i}^{\varphi(m)}a_{s}x^{s}\right)  &  &  i^{\prime}%
=\varphi(m)-i-1\\
&  =~~ \mathcal{R}_{-(i+1)r}\left(  x^{\mathrm{rem}(-\varphi(m)r,m)}\Psi
_{m}\cdot\mathcal{E}_{r}\sum_{s=\varphi(m)-i}^{\varphi(m)}a_{s}x^{s}\right)
&  &  \text{by Lemma}~\ref{lem:operation}\\
&  =~~ \mathcal{R}_{-(i+1)r}\left(  \Psi_{m}\cdot\mathcal{E}_{r}%
\sum_{s=\varphi(m)-i}^{\varphi(m)}a_{\varphi(m)-s}x^{\mathrm{rem}\left(
s-\varphi(m),m\right)  }\right)  &  &  a_{s}=a_{\varphi(m)-s}\\
&  =~~ \mathcal{R}_{-(i+1)r}\left(  \Psi_{m}\cdot\mathcal{E}_{r}\sum_{t=0}%
^{i}a_{t}x^{\mathrm{rem}\left(  -t,m\right)  }\right)  &  &  t=\varphi(m)-s\\
&  =~~ \mathcal{R}_{-(i+1)r}\left(  \Psi_{m}\cdot\mathcal{E}_{-r}
\mathcal{T}_{i+1}\Phi_{m}\right)  &  &  \text{by Notation}~\ref{not:operation}%
\\
&  =- \mathcal{R}_{-(i+1)r}\left(  x^{\psi(m)}\Psi_{m}(x^{-1})\cdot
\mathcal{E}_{-r} \mathcal{T}_{i+1}\Phi_{m}\right)  &  &  \Psi_{m}%
=-x^{\psi\left(  m\right)  }\Psi_{m}(x^{-1})\\
&  =- \mathcal{R}_{-r}\mathcal{R}_{-ir}\left(  x^{m-1-(\varphi(m)-1)}\Psi
_{m}(x^{-1})\cdot\mathcal{E}_{-r} \mathcal{T}_{i+1}\Phi_{m}\right)  &  & \\
&  =~~\mathcal{R}_{-r+\varphi(m)-1}\mathcal{R}_{-ir}\left(  x^{m-1}\Psi
_{m}(x^{-1})\cdot\mathcal{E}_{-r} \mathcal{T}_{i+1}\Phi_{m}\right)  &  &
\text{by Lemma}~\ref{lem:operation}\\
&  =~~\mathcal{R}_{-r+\varphi(m)-1}x^{m-1}f_{m,p,i,0}(x^{-1}) &  &  \text{by
Theorem}~\ref{thm:explicit}\\
&  =~~\mathcal{R}_{-r+\varphi(m)-1}\mathcal{F}f_{m,p,i,0} &  & \\
&  &
\end{align*}
Hence $f_{m,p,i^{\prime},0}=\mathcal{R}_{\varphi(m)-1-r}\mathcal{F}%
f_{m,p,i,0}$
\end{enumerate}
\end{proof}

\subsection{Proof of Theorem~\ref{thm:inter-str} (Inter-Structure)}

\begin{proof}
[Proof of Theorem~\ref{thm:inter-str}]\ 

\begin{enumerate}
\item {\textbf{Invariance:}} Recall
\[
f_{m,\tilde{p},i,0} =- \mathcal{R}_{ir}(\Psi_{m}\sum_{s=0}^{i} a_{s} x^{sr})
\]
Thus
\[
f_{m,\tilde{p},i,0}=f_{m,p,i,0}%
\]

\item {\textbf{Semi-invariance:}} Note
\begin{align*}
f_{m,\tilde{p},i,0}  &  =~~\mathcal{R}_{i\tilde{r}}\left(  \Psi_{m}%
\cdot\mathcal{E}_{\tilde{r}}\mathcal{T}_{i+1}\Phi_{m}\right)  &  &  \text{from
Theorem~\ref{thm:explicit} }\\
&  = ~~\mathcal{R}_{-ir}\left(  \Psi_{m}\cdot\mathcal{E}_{-r}\mathcal{T}%
_{i+1}\Phi_{m}\right)  &  &  \text{from Notation}~\ref{not:operation} \text{
and }\tilde{r}=m-r\\
&  = ~~\mathcal{R}_{-ir}\left(  x^{\psi(m)}\ \Psi_{m}(x^{-1})\cdot
\mathcal{E}_{-r}\mathcal{T}_{i+1}\Phi_{m}\right)  & \\
&  = ~~\mathcal{R}_{-ir}\left(  x^{m-1-(\varphi(m)-1)}\ \Psi_{m}(x^{-1}%
)\cdot\mathcal{E}_{-r}\mathcal{T}_{i+1}\Phi_{m}\right)  & \\
&  = ~~\mathcal{R}_{\varphi(m)-1}\mathcal{R}_{-ir}\left(  x^{m-1} \Psi
_{m}(x^{-1})\cdot\mathcal{E}_{-r}\mathcal{T}_{i+1}\Phi_{m}\right)  &  &
\text{by Lemma}~\ref{lem:operation}\\
&  = - \mathcal{R}_{\varphi(m)-1}x^{m-1}f_{m,p,i,0}(x^{-1}) &  &  \text{from
Theorem~\ref{thm:explicit} }\\
&  = -\mathcal{R}_{\varphi(m)-1}\mathcal{F}f_{m,p,i,0} &  &
\end{align*}
Hence
\[
f_{m,\tilde{p},i,0}=-\mathcal{R}_{\varphi(m)-1}\mathcal{F}f_{m,p,i,0}%
\]

\end{enumerate}
\end{proof}

\end{document}